\renewcommand{\PackageWarningNoLine}[2]{}
\begin{document}

\title*{Dirichlet-Neumann and Neumann-Neumann Waveform Relaxation for the Wave Equation}
\titlerunning{DNWR and NNWR Methods}
\author{Martin J. Gander\inst{1}, Felix Kwok\inst{1} and Bankim C. Mandal \inst{1}}
\institute{Department of Mathematics, University of Geneva, Switzerland. \texttt{Martin.Gander@unige.ch,Felix.Kwok@unige.ch,Bankim.Mandal@unige.ch}}
%

%
\maketitle
\abstract*{ We present a Waveform Relaxation (WR) version of the
  Dirichlet-Neumann and Neumann-Neumann algorithms for the wave
  equation in space time.  Each method is based on a non-overlapping
  spatial domain decomposition, and the iteration involves subdomain
  solves in space time with corresponding interface condition,
  followed by a correction step. Using a Laplace transform argument,
  for a particular relaxation parameter, we prove convergence of both
  algorithms in a finite number of steps for finite time
  intervals. The number of steps depends on the size of the subdomains
  and the time window length on which the algorithms are employed. We
  illustrate the performance of the algorithms with numerical results,
  and also show a comparison with classical and optimized Schwarz WR
  methods.}
\section{Introduction}
We present two new types of Waveform Relaxation (WR) methods for
hyperbolic problems based on the Dirichlet-Neumann and Neumann-Neumann
algorithms, and present convergence results for these methods. The
Dirichlet-Neumann algorithm for elliptic problems was first considered
by Bj{\o}rstad \& Widlund \cite{BjWid}; 
the Neumann-Neumann algorithm was introduced by Bourgat et
al. \cite{BouGT}. 
The performance of these algorithms for elliptic problems is now well
understood, see for example the book \cite{TosWid}.

To solve time-dependent problems in parallel, one can either
discretize in time to obtain a sequence of steady problems, and then
apply domain decomposition algorithms to solve the steady problems at
each time step in parallel, or one can first discretize in space and
then apply WR to the large system of ordinary differential equations
(ODEs) obtained from the spatial discretization. WR has its roots in
the work of Picard and Lindel\"{o}f, who
studied existence and uniqueness of solutions of ODEs in the late 19th
century. Lelarasmee, Ruehli and Sangiovanni-Vincentelli \cite{LelRue}
rediscovered WR as a parallel method for the solution of ODEs. The
main computational advantage of WR is parallelization, and the
possible use of different discretizations in different space-time
subdomains.

Domain decomposition methods for elliptic PDEs can be extended to
time-dependent problems by using the same decomposition in space.
This leads to WR type methods, see \cite{Bjor}.  The
systematic extension of the classical Schwarz method to time-dependent
parabolic problems was started independently in
\cite{GanStu,GilKel}. Like WR algorithms in general, the so-called
Schwarz Waveform Relaxation algorithms (SWR) converge relatively
slowly, except if the time window size is short. A remedy is to use
optimized transmission conditions, which leads to much faster
algorithms, see \cite{GH1} for parabolic problems, and
\cite{GHN} for hyperbolic problems. More recently, we studied the
WR extension of the Dirichlet-Neumann and Neumann-Neumann methods for
parabolic problems \cite{GKM1,Mandal,Kwok}. We proved for the heat
equation that on finite time intervals, the Dirichlet-Neumann Waveform
Relaxation (DNWR) and the Neumann-Neumann Waveform Relaxation (NNWR)
methods converge superlinearly for an optimal choice of the relaxation
parameter. DNWR and NNWR also converge faster than classical and
optimized SWR in this case. 

In this paper, we define DNWR and NNWR for
the second order wave equation
\begin{align}\label{mandalb_dd22:model}
\partial_{tt}u-c^{2}\Delta u & =  f(\boldsymbol{x},t), & &\boldsymbol{x}\in\Omega,0<t<T,\nonumber\\
u(\boldsymbol{x},0) &=  u_{0}(\boldsymbol{x}),\;  u_{t}(\boldsymbol{x},0)  =  v_{0}(\boldsymbol{x}), && \boldsymbol{x}\in\Omega,\\
u(\boldsymbol{x},t) & =  g(\boldsymbol{x},t), & & \boldsymbol{x}\in\partial\Omega,0<t<T,\nonumber
\end{align}
where $\Omega\subset\mathbb{R}^{d}$, $d=1,2,3$, is a bounded domain
with a smooth boundary, and $c$ denotes the wave speed, and we analyze the convergence of both algorithms for the 1d wave equation.


\section{Domain decomposition and algorithms}

To explain the new algorithms, we assume for simplicity that the
spatial domain $\Omega$ is partitioned into two non-overlapping
subdomains $\Omega_{1}$ and $\Omega_{2}$. We denote by $u_{i}$ the
restriction of the solution $u$ of (\ref{mandalb_dd22:model}) to
$\Omega_{i}$, $i=1,2$, and by $n_{i}$ the unit outward normal for
$\Omega_{i}$ on the interface
$\Gamma:=\partial\Omega_{1}\cap\partial\Omega_{2}$.

The {\em Dirichlet-Neumann Waveform Relaxation algorithm (DNWR)}
consists of the following steps: given an initial guess
$h^{0}(x,t)$, $t\in(0,T)$ along the interface $\Gamma\times(0,T)$, compute
for $k=1,2,...$ with $u_{1}^{k} = g,$ on $\partial\Omega_{1}\setminus\Gamma$ and $u_{2}^{k} = g,$ on $\partial\Omega_{2}\setminus\Gamma$ the approximations
\begin{equation}\label{mandalb_dd22:DNWR1} 
\arraycolsep0.2em 
\begin{array}{c}
\begin{array}{rcll}
\partial_{tt}u_{1}^{k}-c^{2}\Delta u_{1}^{k} & = & f, & \textrm{in}\; \Omega_{1},\\
u_{1}^{k}(\boldsymbol{x},0) & = & u_{0}(\boldsymbol{x}), & \textrm{in}\; \Omega_{1},\\
\partial_{t}u_{1}^{k}(\boldsymbol{x},0) & = & v_{0}(\boldsymbol{x}), &  \textrm{in}\; \Omega_{1},\\
u_{1}^{k} & = & h^{k-1}, & \textrm{on}\; \Gamma,
\end{array}\;
\begin{array}{rcll}
\partial_{tt}u_{2}^{k}-c^{2}\Delta u_{2}^{k} & = & f, & \textrm{in}\; \Omega_{2},\\
u_{2}^{k}(\boldsymbol{x},0) & = & u_{0}(\boldsymbol{x}), & \textrm{in}\; \Omega_{2},\\
\partial_{t}u_{2}^{k}(\boldsymbol{x},0) & = & v_{0}(\boldsymbol{x}),  &\textrm{in}\; \Omega_{2},\\
\partial_{n_{2}} u_{2}^{k} & = & -\partial_{n_{1}} u_{1}^{k}, & \textrm{on}\; \Gamma,\\
\end{array}\medskip \\
h^{k}(x,t)=\theta u_{2}^{k}\left|_{\Gamma\times(0,T)}\right.+(1-\theta)h^{k-1}(x,t),
\end{array}
\end{equation}
where $\theta\in(0,1]$ is a relaxation parameter. 

The {\em Neumann-Neumann Waveform Relaxation algorithm (NNWR)} starts
with an initial guess $w^{0}(x,t)$, $t\in(0,T)$ along the interface
$\Gamma\times(0,T)$ and then computes for $\theta\in(0,1]$ simultaneously for $i=1,2$ with $k=1,2,...$
\begin{equation}\label{mandalb_dd22:NNWR1} 
\arraycolsep0.01em 
\begin{array}{c}
\begin{array}{rcll}
\partial_{tt}u_{i}^{k}-c^{2}\Delta u_{i}^{k} & = & f, & \textrm{in}\; \Omega_{i},\\
u_{i}^{k}(\boldsymbol{x},0) & = & u_{0}(\boldsymbol{x}), & \textrm{in}\; \Omega_{i},\\
\partial_{t}u_{i}^{k}(\boldsymbol{x},0) & = & v_{0}(\boldsymbol{x}), & \textrm{in}\; \Omega_{i},\\
u_{i}^{k} & = & g, & \textrm{on}\; \partial\Omega_{i}\setminus\Gamma,\\
u_{i}^{k} & = & w^{k-1}, & \textrm{on}\; \Gamma,
\end{array} \begin{array}{rcll}
\partial_{tt}\psi_{i}^{k}-c^{2}\Delta\psi_{i}^{k} & = & 0, & \textrm{in}\; \Omega_{i},\\
\psi_{i}^{k}(\boldsymbol{x},0) & = & 0, & \textrm{in}\; \Omega_{i},\\
\partial_{t}\psi_{i}^{k}(\boldsymbol{x},0) & = & 0, & \textrm{in}\; \Omega_{i},\\
\psi_{i}^{k} & = & 0, & \textrm{on}\; \partial\Omega_{i}\setminus\Gamma,\\
\partial_{n_{i}}\psi_{i}^{k} & = & \partial_{n_{1}}u_{1}^{k}+\partial_{n_{2}}u_{2}^{k}, & \textrm{on}\; \Gamma,
\end{array}\medskip\\ 
w^{k}(x,t)=w^{k-1}(x,t)-\theta\left[\psi_{1}^{k}\left|_{\Gamma\times(0,T)}\right.+\psi_{2}^{k}\left|_{\Gamma\times(0,T)}\right.\right].
\end{array}
\end{equation}
\section{Kernel estimates and convergence analysis }

We present the case $d=1$, with $\Omega=(-a,b)$,
$\Omega_{1}=(-a,0)$ and $\Omega_{2}=(0,b)$.  By linearity, it suffices
to study the error equations, $f(\boldsymbol{x},t)=0$,
$g(\boldsymbol{x},t)=0$,
$u_{0}(\boldsymbol{x})=v_{0}(\boldsymbol{x})=0$ in
(\ref{mandalb_dd22:DNWR1}) and (\ref{mandalb_dd22:NNWR1}), and to
examine convergence to zero.

Our convergence analysis is based on Laplace transforms.  
The Laplace transform of a function $u(x,t)$ with respect to time $t$ is defined
by $\hat{u}(x,s)=\mathcal{L}\left\{ u(x,t)\right\}
:=\int_{0}^{\infty}e^{-st}u(x,t)\, dt$, $s \in {\mathbb C}$.  
Applying a Laplace transform to the DNWR algorithm in
(\ref{mandalb_dd22:DNWR1}) in 1d, we obtain for the transformed error equations
\begin{equation}\label{mandalb_dd22:DNWRL1}\arraycolsep0.0005em 
\begin{array}{c}
\begin{array}{rcll}   
(s^2\!-\!c^2\partial_{xx})\hat{u}_{1}^{k}(x,s)&=&0 &\!\!\! \!\!\textrm{in $(-a,0)$},\\   
\hat{u}_{1}^{k}(-a,s)&=&0,\\   
\hat{u}_{1}^{k}(0,s)&=&\hat{h}^{k-1}(s), 
\end{array}\quad 
\begin{array}{rcll}   
(s^2\!-\!c^2\partial_{xx})\hat{u}_{2}^{k}(x,s)&=&0 & \!\!\!\!\! \textrm{in $(0,b)$},\\  
\partial_{x}\hat{u}_{2}^{k}(0,s)&=&\partial_{x}\hat{u}_{1}^{k}(0,s),\\   
\hat{u}_{2}^{k}(b,s)&=&0, 
\end{array}\medskip\\
\hat{h}^{k}(s)=\theta\hat{u}_{2}^{k}(0,s)+(1-\theta)\hat{h}^{k-1}(s).
\end{array}
\end{equation}
Solving the two-point boundary value problems in
(\ref{mandalb_dd22:DNWRL1}), we get
$$\begin{array}{rclrcl} 
\hat{u}_{1}^{k}&=&\frac{\hat{h}^{k-1}(s)}{\sinh\left(as/c\right)}\sinh\left( (x+a)\frac{s}{c}\right), \quad&\hat{u}_{2}^{k}&=&\hat{h}^{k-1}(s)\frac{\coth(as/c)}{\cosh(bs/c)}\sinh\left((x-b)\frac{s}{c}\right), 
\end{array}$$ 
and inserting them into the updating condition (last line in
(\ref{mandalb_dd22:DNWRL1})), we get by induction
\begin{equation}\label{mandalb_dd22:DNiter}
\hat{h}^{k}(s)=\left[ 1-\theta-\theta\coth(as/c)\tanh(bs/c)\right] ^{k}\hat{h}^{0}(s),\quad k=1,2,\ldots
\end{equation}
Similarly, the Laplace transform of the NNWR algorithm in
(\ref{mandalb_dd22:NNWR1}) for the error equations yields for the
subdomain solutions
$$\begin{array}{rclrcl}   
\hat{u}_{1}^{k}(x,s)&=&\frac{\hat{w}^{k-1}(s)}{\sinh(as/c)}\sinh\left( (x+a)\frac{s}{c}\right),\; &\hat{u}_{2}^{k}(x,s)&=&-\frac{\hat{w}^{k-1}(s)}{\sinh(bs/c)}\sinh\left( (x-b)\frac{s}{c}\right),\\   
\hat{\psi}_{1}^{k}(x,s)&=&\frac{\hat{w}^{k-1}(s)\Psi(s)}{\cosh(as/c)}\sinh\left( (x+a)\frac{s}{c}\right), \; &\hat{\psi}_{2}^{k}(x,s)&=&\frac{\hat{w}^{k-1}(s)\Psi(s)}{\cosh(bs/c)}\sinh\left( (x-b)\frac{s}{c}\right),
\end{array}$$ 
where $\Psi(s)=\left[ \coth(as/c)+\coth(bs/c)\right] $. Therefore, in Laplace space the updating condition in (\ref{mandalb_dd22:NNWR1}) becomes 
\begin{equation}\label{mandalb_dd22:NNiter}
\hat{w}^{k}(s)=\left[1-\theta\left( 2+\frac{\coth(as/c)}{\coth(bs/c)}+\frac{\coth(bs/c)}{\coth(as/c)}\right) \right]^{k}\hat{w}^{0}(s),\quad k=1,2,\ldots
\end{equation}

\begin{theorem}[Convergence, symmetric decomposition] 
For a symmetric decomposition, $a=b$, convergence is linear for the
DNWR (\ref{mandalb_dd22:DNWR1}) with $\theta\in(0,1)$,
$\theta\ne\frac{1}{2}$, and for the NNWR (\ref{mandalb_dd22:NNWR1}) with
$\theta\in(0,1)$, $\theta\ne\frac{1}{4}$. If
$\theta=\frac{1}{2}$ for DNWR, or $\theta=\frac{1}{4}$ for NNWR,
convergence is achieved in two iterations.
\end{theorem}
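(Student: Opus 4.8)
The plan is to read the conclusion directly off the Laplace-space iteration formulas (\ref{mandalb_dd22:DNiter}) and (\ref{mandalb_dd22:NNiter}), exploiting the fact that the symmetric choice $a=b$ collapses the transcendental contraction symbol into a constant that no longer depends on $s$. First, for the DNWR I would set $a=b$ in the bracket of (\ref{mandalb_dd22:DNiter}) and use $\coth(as/c)\tanh(as/c)=1$, so that $1-\theta-\theta\coth(as/c)\tanh(as/c)=1-2\theta$ and hence $\hat{h}^{k}(s)=(1-2\theta)^{k}\hat{h}^{0}(s)$. Likewise, putting $a=b$ in (\ref{mandalb_dd22:NNiter}) makes both ratios $\coth(as/c)/\coth(bs/c)$ and $\coth(bs/c)/\coth(as/c)$ equal to $1$, so the bracket becomes $1-\theta(2+1+1)=1-4\theta$ and $\hat{w}^{k}(s)=(1-4\theta)^{k}\hat{w}^{0}(s)$.

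The second step is to invert the Laplace transform. Since in each case the multiplier is a constant independent of $s$, linearity of $\mathcal{L}^{-1}$ shows that the same constant multiplies the datum back in the time domain, giving $h^{k}(t)=(1-2\theta)^{k}h^{0}(t)$ and $w^{k}(t)=(1-4\theta)^{k}w^{0}(t)$ as functions of $t\in(0,T)$. This is the whole point of the symmetric case: no convolution kernel survives, so no finite-speed-of-propagation or finite-time kernel estimate is needed, and convergence is governed entirely by a single scalar factor.

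From here the quantitative claims follow immediately. For the DNWR with $\theta\in(0,1)$, $\theta\neq\frac12$, the factor satisfies $|1-2\theta|\in(0,1)$, so $\|h^{k}\|=|1-2\theta|^{k}\|h^{0}\|\to0$ geometrically, which is linear convergence with contraction factor $|1-2\theta|$; the analogous statement for the NNWR holds with factor $|1-4\theta|$ when $\theta\neq\frac14$. When $\theta=\frac12$ (DNWR) or $\theta=\frac14$ (NNWR) the factor vanishes, so $h^{1}\equiv0$, respectively $w^{1}\equiv0$.

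The last and most delicate point, which I expect to be the only genuine subtlety, is to explain why a vanishing transmitted interface trace after the first step gives convergence in two iterations rather than one. I would argue at the level of the algorithm rather than the transform: once the interface datum handed to the next sweep is identically zero, the subdomain problems in (\ref{mandalb_dd22:DNWR1}) (respectively (\ref{mandalb_dd22:NNWR1})) are driven by entirely homogeneous data — zero source, zero initial conditions, and zero boundary and interface conditions — so the subdomain errors $u_{i}^{2}$ (and $\psi_{i}^{2}$) vanish identically. Thus the interface error is annihilated at step $1$ and the subdomain solutions at step $2$, which is the precise sense of convergence in two iterations; the only care required is the bookkeeping observation that $h^{1}=0$ does not by itself force the first iterates $u_{i}^{1}$ to vanish.
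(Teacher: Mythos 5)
Your proposal is correct and follows essentially the same route as the paper: set $a=b$ so the Laplace-space multiplier collapses to the constant $1-2\theta$ (DNWR) or $1-4\theta$ (NNWR), invert trivially, and read off linear convergence or, for $\theta=\frac12$ resp.\ $\theta=\frac14$, the vanishing of the interface error after one step followed by exact subdomain solves in the second. Your closing remark spelling out why one \emph{more} iteration is needed after $h^1\equiv 0$ is a small but welcome elaboration of what the paper states in one line.
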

\begin{proof}
For $a=b$, equation (\ref{mandalb_dd22:DNiter}) reduces to
$\hat{h}^{k}(s)=(1-2\theta)^{k}\hat{h}^{0}(s),$ which has the simple
back transform $h^{k}(t)=(1-2\theta)^{k}h^{0}(t)$.  Thus for the DNWR
method, the convergence is linear for $0<\theta<1,\theta\neq\frac{1}{2}$.  For
$\theta=\frac{1}{2}$, we have $h^{1}(t)=0$. Hence, one more iteration produces
the desired solution on the whole domain.

For the NNWR algorithm, inserting $a=b$ into equation
(\ref{mandalb_dd22:NNiter}), we obtain similarly
$w^{k}(t)=(1-4\theta)^{k}w^{0}(t)$, which leads to the second result.
$\hfill\qed$
\end{proof}
We next analyze the case of an asymmetric decomposition, $a\neq b$. 
\begin{lemma}\label{mandalb_dd22:Lemma1}
Let $a,b>0$ and $s\in{\mathbb C}$, with $\textrm{Re}(s)>0$. Then, we
have the identity
\begin{multline*}
G_{b}^{a}(s):=\coth(as/c)\tanh(bs/c)-1\\
=2{\displaystyle \sum_{m=1}^{\infty}}e^{-2ams/c}-2{\displaystyle \sum_{n=1}^{\infty}}(-1)^{n-1}e^{-2bns/c}
-4{\displaystyle \sum_{n=1}^{\infty}}{\displaystyle \sum_{m=1}^{\infty}}(-1)^{n-1}e^{-2(bn+am)s/c}.
\end{multline*}
\end{lemma}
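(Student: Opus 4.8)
The plan is to rewrite $\coth(as/c)$ and $\tanh(bs/c)$ through the abbreviations $q=e^{-2as/c}$, $r=e^{-2bs/c}$, expand each as a geometric series, and then multiply the two series. The hypothesis $\textrm{Re}(s)>0$ is exactly what makes this work: it forces $|q|<1$ and $|r|<1$, so every geometric series below converges absolutely.

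First I would record the two single-variable expansions. Writing $\coth(as/c)=(1+q)/(1-q)$ and $\tanh(bs/c)=(1-r)/(1+r)$, expanding the denominators as $\sum_{m\ge0}q^{m}$ and $\sum_{n\ge0}(-r)^{n}$, multiplying out, shifting the index by one in the trailing piece, and using $(-1)^{n}=-(-1)^{n-1}$ to merge terms gives
\[
\coth(as/c)=1+2\sum_{m=1}^{\infty}q^{m},\qquad \tanh(bs/c)=1-2\sum_{n=1}^{\infty}(-1)^{n-1}r^{n}.
\]
These are the two single sums that will reappear in the final identity, already carrying the correct signs and the alternating factor $(-1)^{n-1}$ on the $b$-series. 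Next I would form the product of these two expressions. Distributing, the cross terms produce exactly four contributions: the constant $1$ from $1\cdot1$; the single sum $+2\sum_{m\ge1}q^{m}$ from $\bigl(2\sum q^{m}\bigr)\cdot 1$; the single sum $-2\sum_{n\ge1}(-1)^{n-1}r^{n}$ from $1\cdot\bigl(-2\sum(-1)^{n-1}r^{n}\bigr)$; and the double sum $-4\sum_{m,n\ge1}(-1)^{n-1}q^{m}r^{n}$. Subtracting the $1$ and substituting back $q^{m}=e^{-2ams/c}$, $r^{n}=e^{-2bns/c}$, $q^{m}r^{n}=e^{-2(am+bn)s/c}$ yields precisely the claimed formula for $G_{b}^{a}(s)$ (the order $bn+am$ versus $am+bn$ in the exponent being immaterial).

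The only step requiring care, and the main obstacle, is the passage from the formal product of the two series to the single unordered double series $\sum_{m,n\ge1}$. I would justify this by absolute convergence: for $\textrm{Re}(s)>0$ both single series converge absolutely, so their product equals the sum of all pairwise products taken in any order (Mertens/Fubini for absolutely convergent series), which is exactly the double sum over $m,n\ge1$. I would state this convergence fact once at the outset and then carry out the purely algebraic bookkeeping above; everything else is routine.
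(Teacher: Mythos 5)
Your proposal is correct and follows essentially the same route as the paper: expand $\coth(as/c)=1+2\sum_{m\ge1}e^{-2ams/c}$ and $\tanh(bs/c)=1-2\sum_{n\ge1}(-1)^{n-1}e^{-2bns/c}$ as geometric/binomial series (valid since $\mathrm{Re}(s)>0$ gives $|e^{-2as/c}|,|e^{-2bs/c}|<1$), multiply, and subtract $1$. Your explicit appeal to absolute convergence to justify the product of series is a point the paper leaves implicit, but the argument is the same.
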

\begin{proof}
Using that $\left|e^{-2bs/c}\right|<1$ for $\textrm{Re}(s)>0$, we
expand $\left(1+e^{-2bs/c}\right)^{-1}$ into an infinite binomial
series to obtain
\begin{equation*}
\tanh\left(\frac{bs}{c}\right) \!= \frac{e^{\frac{bs}{c}}-e^{-\frac{bs}{c}}}{e^{\frac{bs}{c}}+e^{-\frac{bs}{c}}}=\!\left(1-e^{-\frac{2bs}{c}}\right)\!\left(1+e^{-\frac{2bs}{c}}\right)^{-1} \!\!\! =\! 1-2{\displaystyle \sum_{n=1}^{\infty}}(-1)^{n-1}e^{-\frac{2bns}{c}}.
\end{equation*}
Similarly, we get $\textrm{coth}(as/c)=1+2{\displaystyle
  \sum_{m=1}^{\infty}}e^{-\frac{2ams}{c}}$, and multiplying the two
and subtracting $1$, we obtain the expression for $G_b^a(s)$ in the
Lemma.$\hfill\qed$
\end{proof}

Using $G_{b}^{a}(s)$ from Lemma \ref{mandalb_dd22:Lemma1}, we obtain for
(\ref{mandalb_dd22:DNiter})
\begin{equation}\label{mandalb_dd22:DNiterfinal}
\hat{h}^{k}(s)=\left\{ \left(1-2\theta\right)-\theta G_{b}^{a}(s)\right\} ^{k}\hat{h}^{0}(s).
\end{equation}
Now if $\theta=\frac{1}{2}$, we see that the linear factor in
(\ref{mandalb_dd22:DNiterfinal}) vanishes, and convergence will be
governed by convolutions of $G_{b}^{a}(s)$. We show next that this
choice also gives finite step convergence, but the number of steps
depends on the length of the time window $T$.
\begin{theorem}[Convergence of DNWR, asymmetric decomposition] 
Let $\theta=\frac{1}{2}$. Then the DNWR algorithm converges in at most $k+1$
iterations for two subdomains of lengths $a\ne b$, if the time window
length $T$ satisfies $T/k\leq2\min\left\{ a/c,b/c\right\}$, where $c$
is the wave speed.
\end{theorem}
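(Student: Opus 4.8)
The plan is to analyze the iteration formula
\eqref{mandalb_dd22:DNiterfinal} with $\theta=\frac{1}{2}$, where the
linear factor vanishes and we are left with
$\hat{h}^{k}(s)=\left(-\frac{1}{2}G_{b}^{a}(s)\right)^{k}\hat{h}^{0}(s)$.
The central observation is that by Lemma \ref{mandalb_dd22:Lemma1},
$G_{b}^{a}(s)$ is a sum of pure exponential terms of the form
$e^{-2(bn+am)s/c}$, $e^{-2ams/c}$, and $e^{-2bns/c}$, each of which
corresponds in the time domain to a \emph{time shift}: recall that
multiplication by $e^{-\tau s}$ in Laplace space corresponds to
translation $t \mapsto t-\tau$ together with the Heaviside cutoff
$H(t-\tau)$, so $\mathcal{L}^{-1}\{e^{-\tau s}\hat{g}(s)\}(t) =
g(t-\tau)H(t-\tau)$. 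The smallest shift appearing in $G_{b}^{a}(s)$
is $2\min\{a/c,b/c\}$.

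First I would make precise that $G_{b}^{a}(s)$, as an infinite sum of
exponentials, acts in the time domain as a convolution operator whose
kernel is supported on $[2\min\{a/c,b/c\},\infty)$. Consequently, the
$k$-th power $\left(G_{b}^{a}(s)\right)^{k}$ corresponds to a $k$-fold
convolution, and the key point is that the support of the inverse
transform gets pushed further and further to the right: each factor
contributes at least a delay of $2\min\{a/c,b/c\}$, so the time-domain
kernel of $\left(G_{b}^{a}(s)\right)^{k}$ is supported on
$[2k\min\{a/c,b/c\},\infty)$. I would establish this by induction on
$k$, using that the support of a convolution is contained in the sum
(Minkowski sum) of the supports of the factors.

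The conclusion then follows by matching this support statement against
the finite time window. If $T \leq 2k\min\{a/c,b/c\}$, equivalently
$T/k \leq 2\min\{a/c,b/c\}$, then on the interval $(0,T)$ the
time-domain representation of
$h^{k}(t)=\mathcal{L}^{-1}\{(-\tfrac{1}{2}G_{b}^{a})^{k}\hat{h}^{0}\}(t)$
is identically zero, because its kernel is supported beyond $T$ and
the convolution with the causal datum $h^{0}$ cannot produce any
contribution before time $2k\min\{a/c,b/c\}\geq T$. Hence $h^{k}\equiv
0$ on $(0,T)$, so the interface iterate has converged to the exact
(zero) error, and one more subdomain solve, i.e.\ iteration $k+1$,
propagates this to the whole domain, giving convergence in at most
$k+1$ iterations.

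The main obstacle I anticipate is the rigorous justification of the
support/causality argument at the level of Laplace transforms rather
than waving hands at the formal exponential series. Specifically, one
must argue that the inverse Laplace transform of an infinite sum of
delayed exponentials is well defined and that termwise inversion is
legitimate on the bounded window $(0,T)$; here the saving grace is
that, on any finite window, only finitely many terms of
$\left(G_{b}^{a}(s)\right)^{k}$ have shifts $\leq T$, since each term
carries a shift that is a nonnegative integer combination $2(am+bn)/c$
of the two delays and only finitely many such combinations fall below
$T$. Thus the infinite sums truncate to finite sums on $(0,T)$, the
inversion is elementary, and the support bound $2k\min\{a/c,b/c\}$
holds rigorously, making the finite-step convergence exact rather than
merely formal.
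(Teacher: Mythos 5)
Your proposal is correct and follows essentially the same route as the paper: with $\theta=\frac12$ the iteration reduces to powers of $G_b^a(s)$, whose expansion from Lemma \ref{mandalb_dd22:Lemma1} consists of delayed exponentials with minimal delay $2\min\{a/c,b/c\}$, so the $k$-th power is supported beyond $2k\min\{a/c,b/c\}\ge T$ and $h^k$ vanishes on $(0,T)$, with one further iteration recovering the solution. The paper carries this out by explicitly collecting the coefficients of the expanded $k$-th power and applying the shift formula (\ref{mandalb_dd22:Lformula}) term by term, which is the same support argument you describe via convolution kernels and Minkowski sums.
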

\begin{proof}
With $\theta=\frac{1}{2}$ we obtain from
(\ref{mandalb_dd22:DNiterfinal}) for $k=1,2,\ldots$
\begin{multline}\label{mandalb_dd22:calcul}
\hat{h}^{k}(s)  =  \left(-\frac{1}{2}\right)^{k}\left\{ G_{b}^{a}(s)\right\} ^{k}\hat{h}^{0}(s)
  =  \left[ -e^{-\frac{2as}{c}}+e^{-\frac{2bs}{c}}+\left({\displaystyle \sum_{n>1}^{\infty}}(-1)^{n-1}e^{-\frac{2bns}{c}}\right.\right.\\
  -\left.\left. {\displaystyle \sum_{m>1}^{\infty}}e^{-\frac{2ams}{c}}+2{\displaystyle \sum_{m=1}^{\infty}}{\displaystyle \sum_{n=1}^{\infty}}(-1)^{n-1}e^{-\frac{2(am+bn)s}{c}}\right)\right] ^{k}\hat{h}^{0}(s)
  =  (-1)^{k}e^{-\frac{2aks}{c}}\hat{h}^{0}(s)\\
  +  e^{-\frac{2bks}{c}}\hat{h}^{0}(s)+\!\left(\!{\displaystyle \sum_{l>k}^{\infty}}p_{l}^{(k)}e^{-\frac{2bls}{c}}\!+\!{\displaystyle \sum_{l>k}^{\infty}}q_{l}^{(k)}e^{-\frac{2als}{c}}\!+\!{\displaystyle \sum_{m+n\geq k}}r_{m,n}^{(k)}e^{-\frac{2(am+bn)s}{c}}\right)\hat{h}^{0}(s),
\end{multline}
$p_{l}^{(k)},q_{l}^{(k)},r_{m,n}^{(k)}$ being the corresponding coefficients. Using the inverse Laplace transform 
\begin{equation}\label{mandalb_dd22:Lformula}
\mathcal{L}^{-1}\left\{ e^{-\alpha s}\hat{g}(s)\right\} =H(t-\alpha)g(t-\alpha),
\end{equation}
$H(t)$ being Heaviside step function, we obtain
\begin{multline*}
h^{k}(t)=(-1)^{k}h^{0}(t-2ak/c)H(t-2ak/c)+h^{0}(t-2bk/c)H(t-2bk/c)\\
+ {\displaystyle \sum_{l>k}^{\infty}}p_{l}^{(k)}h^{0}(t-2bl/c)H(t-2bl/c) +
{\displaystyle \sum_{l>k}^{\infty}}q_{l}^{(k)}h^{0}(t-2al/c)H(t-2al/c)\\
+{\displaystyle \sum_{m+n\geq k}}r_{m,n}^{(k)}h^{0}(t-2(am+bn)/c)H(t-2(am+bn)/c).
\end{multline*}
Now if we choose our time window such that $T\leq2k\min\left\{ \frac{a}{c},\frac{b}{c}\right\} $,
then $h^{k}(t)=0$, and therefore one more iteration produces
the desired solution on the entire domain.$\hfill\qed$
\end{proof}
Using $G_{b}^{a}(s)$ from Lemma \ref{mandalb_dd22:Lemma1}, we can
also rewrite (\ref{mandalb_dd22:NNiter}) in the form
\begin{equation}\label{mandalb_dd22:NNiterfinal}
\hat{w}^{k}(s)=\left\{ \left(1-4\theta\right)-\theta\left(G_{b}^{a}(s)+G_{a}^{b}(s)\right)\right\} ^{k}\hat{w}^{0}(s),\quad k=1,2,\ldots,
\end{equation}
and we see that for NNWR, the choice $\theta=\frac{1}{4}$ removes the linear
factor.
\begin{theorem}[Convergence of NNWR, asymmetric decomposition] 
  Let $\theta=\frac{1}{4}$. Then the NNWR algorithm converges in at
  most $k+1$ iterations for two subdomains of lengths $a\ne b$, if the
  time window length $T$ satisfies $T/k\leq4\min\left\{
  a/c,b/c\right\}$, $c$ being again the wave speed.
\end{theorem}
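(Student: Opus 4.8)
The plan is to follow the template of the DNWR theorem, replacing the kernel $G_{b}^{a}(s)$ by the symmetrized kernel that governs $(\ref{mandalb_dd22:NNiterfinal})$. Choosing $\theta=\frac{1}{4}$ annihilates the linear factor $(1-4\theta)$ in $(\ref{mandalb_dd22:NNiterfinal})$, leaving
\[
\hat{w}^{k}(s)=\left(-\frac{1}{4}\right)^{k}\left(G_{b}^{a}(s)+G_{a}^{b}(s)\right)^{k}\hat{w}^{0}(s),\qquad k=1,2,\ldots
\]
As in the DNWR case, everything then reduces to locating the smallest delay (the smallest $\alpha>0$ for which a term $e^{-\alpha s}$ occurs) in the symmetrized kernel $G_{b}^{a}(s)+G_{a}^{b}(s)$.

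First I would expand $G_{a}^{b}(s)$ by interchanging $a$ and $b$ in Lemma \ref{mandalb_dd22:Lemma1} and add it to $G_{b}^{a}(s)$. The decisive observation---and the only genuine difference from DNWR---is that the two slowest exponentials cancel exactly: the contribution $2e^{-2as/c}$ coming from the $\coth(as/c)$ factor of $G_{b}^{a}$ is cancelled by the $-2e^{-2as/c}$ coming from the $\tanh(as/c)$ factor of $G_{a}^{b}$, and symmetrically the two $e^{-2bs/c}$ terms cancel. Every surviving exponential is then of the form $e^{-2(am+bn)s/c}$ with integers $m,n\ge0$, $(m,n)\neq(0,0)$, and with the points $(1,0)$ and $(0,1)$ removed. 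A short case check shows that any remaining lattice point satisfies $2(am+bn)\ge4\min\{a,b\}$, the minimum being attained at $(2,0)$ or $(0,2)$ with nonzero coefficient. Hence the smallest delay in $G_{b}^{a}+G_{a}^{b}$ is $4\min\{a/c,b/c\}$, exactly twice the delay $2\min\{a/c,b/c\}$ that controlled DNWR, which is what produces the factor $4$ in the statement.

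Next I would raise the kernel to the $k$-th power: since each of the $k$ factors carries a delay of at least $4\min\{a/c,b/c\}$, every exponential in $\left(G_{b}^{a}+G_{a}^{b}\right)^{k}$ carries a delay of at least $4k\min\{a/c,b/c\}$. Inverting the Laplace transform term by term via the shift rule $(\ref{mandalb_dd22:Lformula})$, the error $w^{k}(t)$ becomes a combination of shifted copies $w^{0}(t-\beta)H(t-\beta)$ with every shift $\beta\ge4k\min\{a/c,b/c\}$. Consequently, if $T\le4k\min\{a/c,b/c\}$, all Heaviside factors vanish on $(0,T)$, forcing $w^{k}\equiv0$ there; one further iteration then reconstructs the exact solution on both subdomains, giving convergence in at most $k+1$ steps.

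The step I expect to be the main obstacle is the bookkeeping at the cancellation: one must verify both that the $(1,0)$ and $(0,1)$ contributions vanish identically in the sum and that no other lattice point $(m,n)$ yields a delay below $4\min\{a,b\}$ (for $a\neq b$ the cross terms contribute delay $2(a+b)>4\min\{a,b\}$, so they do not interfere). Once this is settled the delay estimate is elementary and the remainder follows the DNWR argument verbatim. A minor technical point is to justify the term-by-term inversion, i.e.\ the uniform absolute convergence of the double series for $\textrm{Re}(s)\ge\sigma>0$; this is inherited from the geometric bounds already used in Lemma \ref{mandalb_dd22:Lemma1}.
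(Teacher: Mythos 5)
Your proposal is correct and follows essentially the same route as the paper: expand $G_b^a+G_a^b$ via Lemma \ref{mandalb_dd22:Lemma1}, observe that the $e^{-2as/c}$ and $e^{-2bs/c}$ terms cancel so the smallest surviving delay is $4\min\{a/c,b/c\}$ (the $(1,1)$ cross term having the larger delay $2(a+b)/c$), raise to the $k$-th power, and back-transform with the shift rule (\ref{mandalb_dd22:Lformula}) to conclude $w^k\equiv 0$ on $(0,T)$. The cancellation bookkeeping you flag as the main obstacle is exactly what the paper's displayed expansion carries out, and your delay estimates match it.
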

\begin{proof}
With $\theta=\frac{1}{4}$ we obtain from
(\ref{mandalb_dd22:NNiterfinal}) with a similar calculation as in
(\ref{mandalb_dd22:calcul})
\begin{multline*}
\hat{w}^{k}(s)  =  \left(-\frac{1}{4}\right)^{k}\left[ G_{b}^{a}(s)+G_{a}^{b}(s)\right] ^{k}\hat{w}^{0}(s)
  =  \left[ -{\displaystyle \sum_{m=1}^{\infty}}\left(e^{-\frac{4ams}{c}}+e^{-\frac{4bms}{c}}\right)\right.\\
  \!+\! \left.{\displaystyle \sum_{m=1}^{\infty}}{\displaystyle \sum_{n=1}^{\infty}}(-1)^{n-1}\left(e^{-\frac{2(am+bn)s}{c}}\!+\! e^{-\frac{2(an+bm)s}{c}}\right)\right] ^{k}\!\!\hat{w}^{0}(s)
  = (-1)^{k}e^{-\frac{4aks}{c}}\!\hat{w}^{0}(s)\\
  +\left[\!(-1)^{k}e^{-\frac{4bks}{c}}\!+\!\left(\!{\displaystyle \sum_{l>k}^{\infty}}d_{l}^{(k)}e^{-\frac{4als}{c}}\!\!+\!\!{\displaystyle \sum_{l>k}^{\infty}}z_{l}^{(k)}e^{-\frac{4bls}{c}}\!\!+\!\!\!\!{\displaystyle \sum_{m+n\geq2k}}\!\!\!j_{m,n}^{(k)}e^{-\frac{2(am+bn)s}{c}}\!\right)\!\right]\!\hat{w}^{0}(s),
\end{multline*}
where $d_{l}^{(k)},z_{l}^{(k)},j_{m,n}^{(k)}$ are the corresponding coefficients.
Now we use (\ref{mandalb_dd22:Lformula}) to back transform and obtain 
\begin{multline*}
w^{k}(t)=(-1)^{k}w^{0}(t-4ak/c)H(t-4ak/c)+(-1)^{k}w^{0}(t-4bk/c)H(t-4bk/c)\\
+ {\displaystyle \sum_{l>k}^{\infty}}d_{l}^{(k)}w^{0}(t-4al/c)H(t-4al/c)
+ {\displaystyle \sum_{l>k}^{\infty}}z_{l}^{(k)}w^{0}(t-4bl/c)H(t-4bl/c)\\
+{\displaystyle \sum_{m+n\geq2k}}j_{m,n}^{(k)}w^{0}(t-2(am+bn)/c)H(t-2(am+bn)/c).
\end{multline*}
So for $T\leq4k\min\left\{ \frac{a}{c},\frac{b}{c}\right\} $, we
get $w^{k}(t)=0$, and the conclusion follows.$\hfill\qed$
\end{proof}

\section{Numerical Experiments}

We perform now numerical experiments to measure the
actual convergence rate of the discretized DNWR and NNWR algorithms for the model problem
\begin{align}\label{mandalb_dd22:Numermodel}
\partial_{tt}u-\partial_{xx}u&=0, && x\in(-3,2),t>0,\nonumber\\
u(x,0)&=0,\: u_{t}(x,0)=xe^{-x},&& -3<x<2,\\
u(-3,t)&=-3e^3t,\: u(2,t)=2e^{-2}t, && t>0,\nonumber
\end{align}
with $\Omega_{1}=(-3,0)$ and $\Omega_{2}=(0,2)$, so that $a=3$ and
$b=2$ in (\ref{mandalb_dd22:DNWRL1}, \ref{mandalb_dd22:DNiter},
\ref{mandalb_dd22:NNiter}).  We discretize the equation using the
centered finite difference in both space and time (Leapfrog scheme) on a grid with $\Delta x=\Delta t=2{\times}10^{-2}$. The error is calculated by $\|u-u^k\|_{L^\infty(0,T;L^2(\Omega))}$, where $u$ is the discrete monodomain solution and $u^k$ is the discrete solution in $k$-th iteration.

We test the DNWR algorithm by choosing $h^{0}(t)=t^{2},t\in(0,T]$ as
an initial guess. In Figure \ref{mandalb_dd22:Fig1}, 
\begin{figure}[t]
\centering
  \mbox{\includegraphics[width=0.51\textwidth]{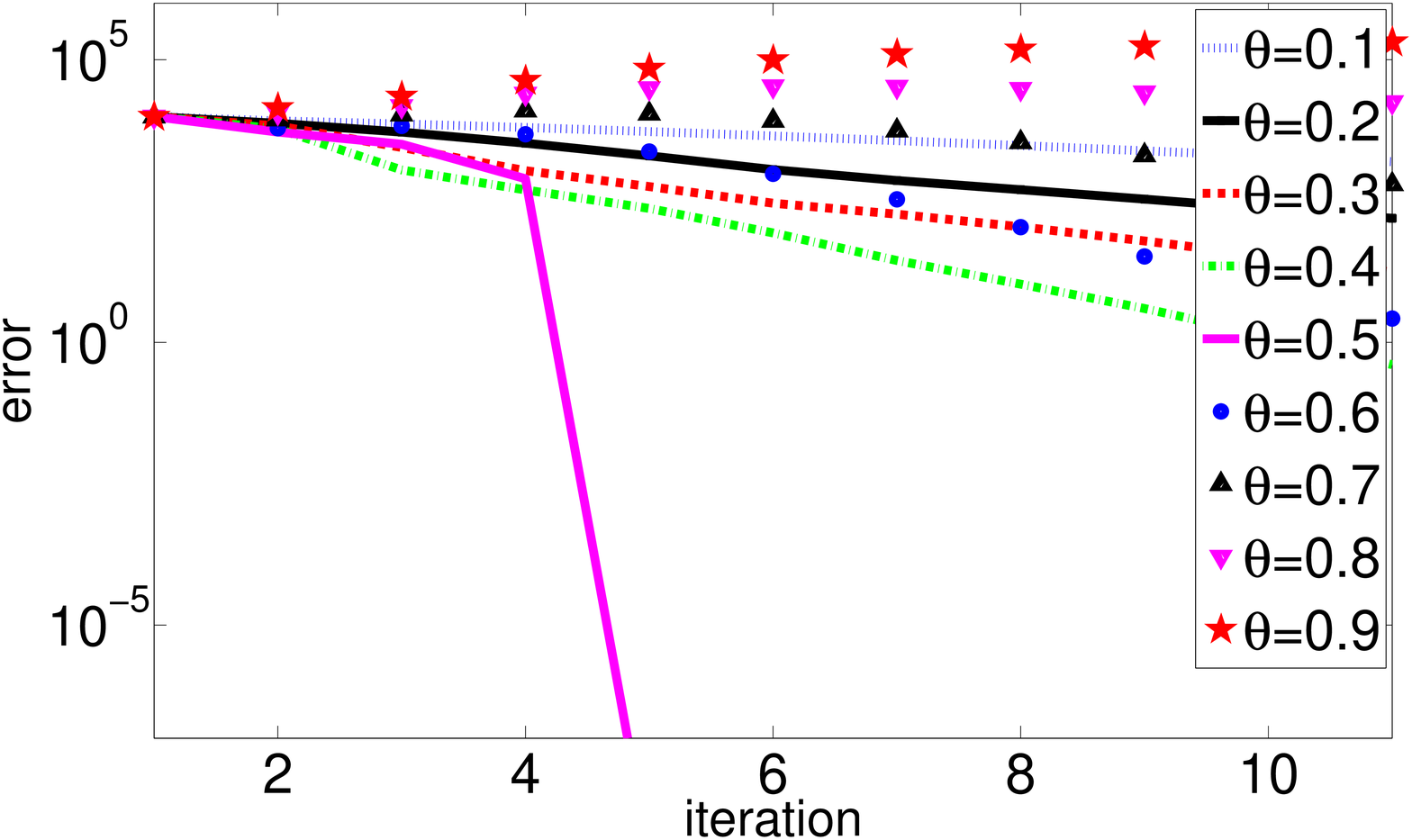}
  \includegraphics[width=0.51\textwidth]{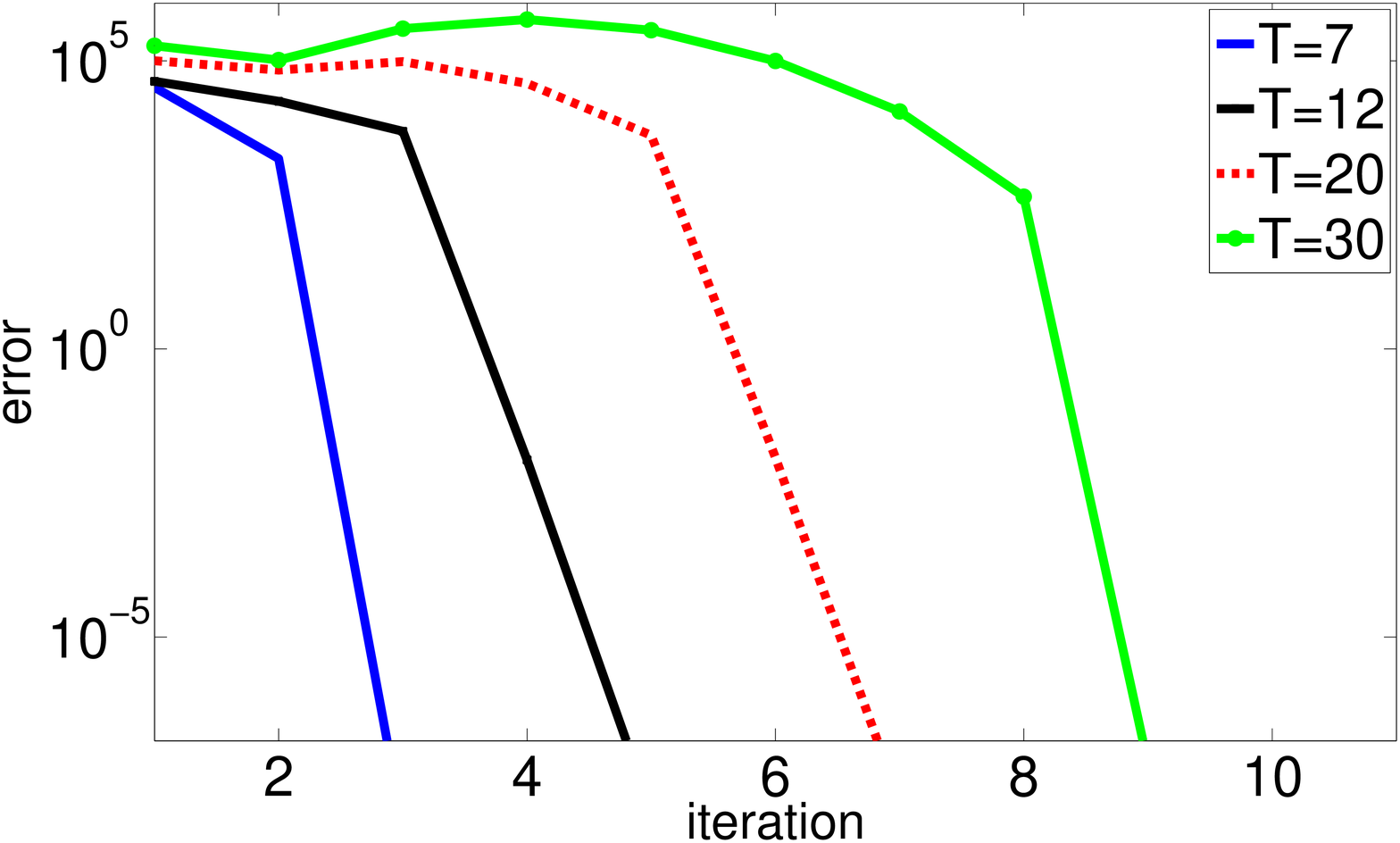}}
\caption{Convergence of DNWR for various values of $\theta$ and $T=16$ on
the left, and for various lengths $T$ of the time window and
 $\theta=\frac{1}{2}$ on the right}
\label{mandalb_dd22:Fig1}
\end{figure} 
we show the convergence behavior for different values of the parameter
$\theta$ for $T=16$ on the left, and on the right for the best
parameter $\theta=\frac{1}{2}$ for different time window length $T$. Note that for some values of $\theta$ ($>0.7$) we get divergence.
For the NNWR method, with the same initial guess,
we show in Figure \ref{mandalb_dd22:Fig2}
\begin{figure}[t]
\centering
  \mbox{\includegraphics[width=0.51\textwidth]{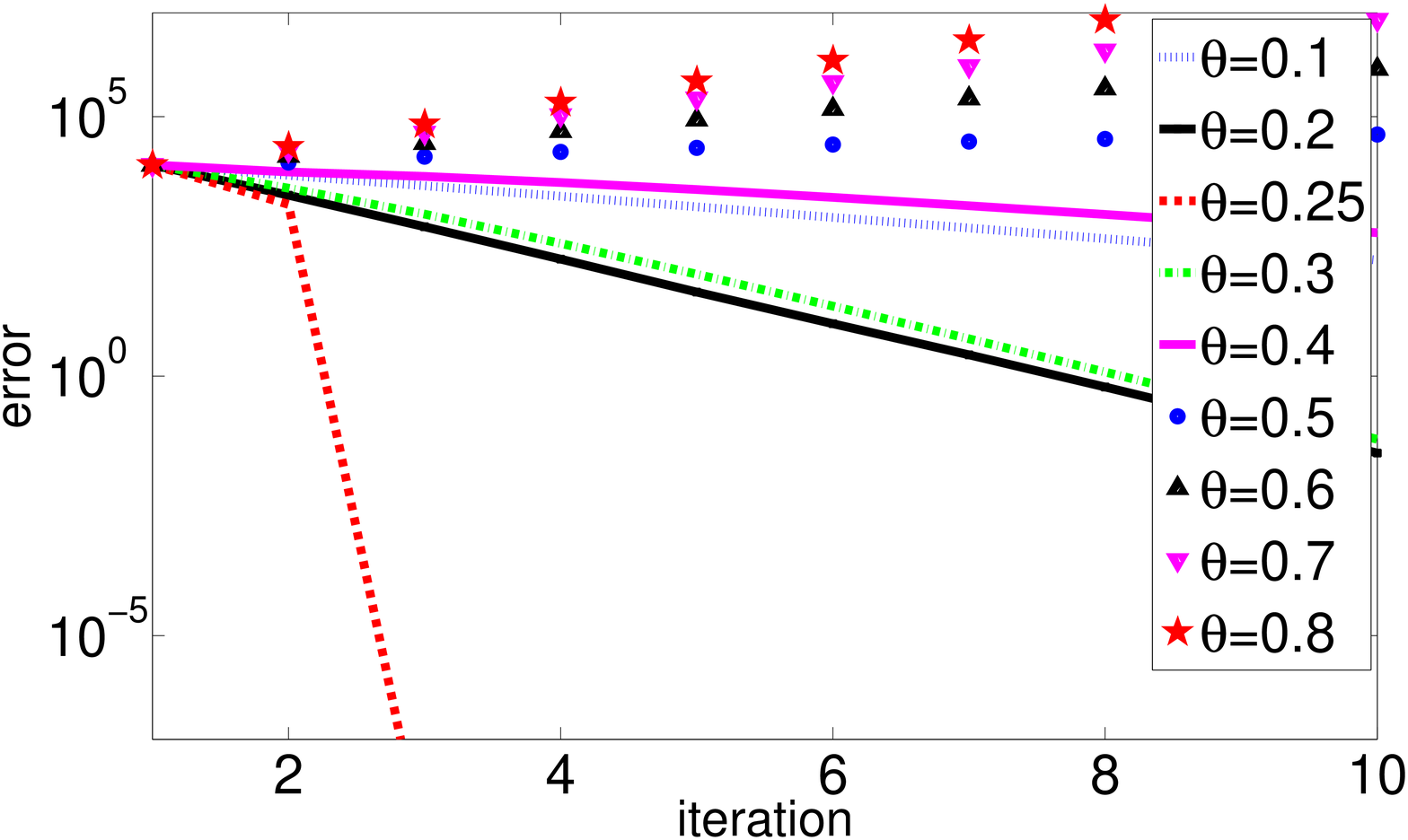}
  \includegraphics[width=0.51\textwidth]{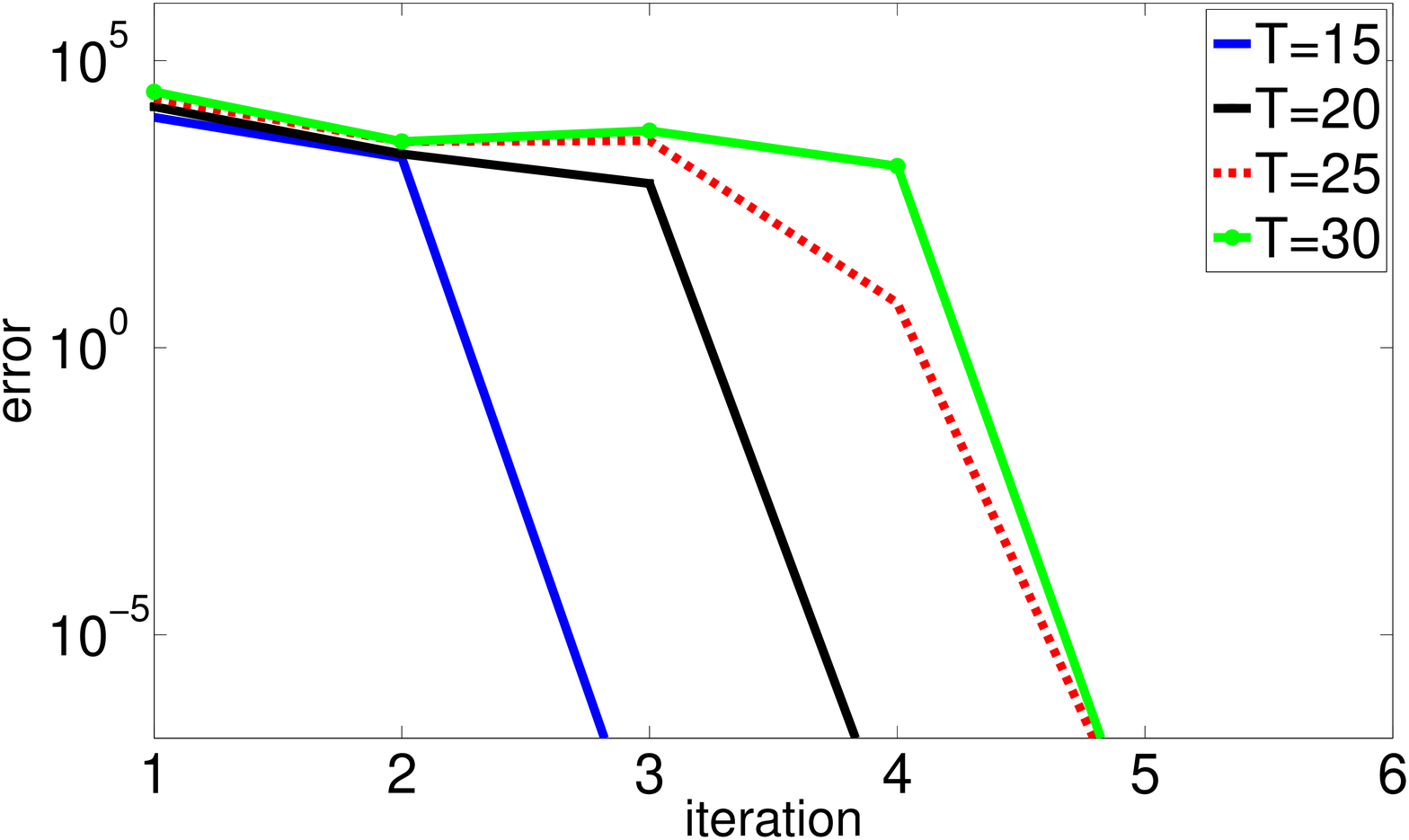}}
  \caption{Convergence of NNWR with various values of $\theta$ for $T=16$ on
the left, and for various lengths $T$ of the time window and
 $\theta=\frac{1}{4}$ on the right}
\label{mandalb_dd22:Fig2}
\end{figure} 
on the left the convergence curves for different values of $\theta$ for $T=16$, and on the right the results for the best parameter $\theta=\frac{1}{4}$ for different time window lengths
$T$.

We finally compare in Figure \ref{mandalb_dd22:Fig3} 
the performance of the DNWR and NNWR algorithms
with the Schwarz Waveform Relaxation (SWR) algorithms from
\cite{GHN} with and without overlap. 
\begin{figure}[t]
\centering
  \mbox{\includegraphics[width=0.51\textwidth]{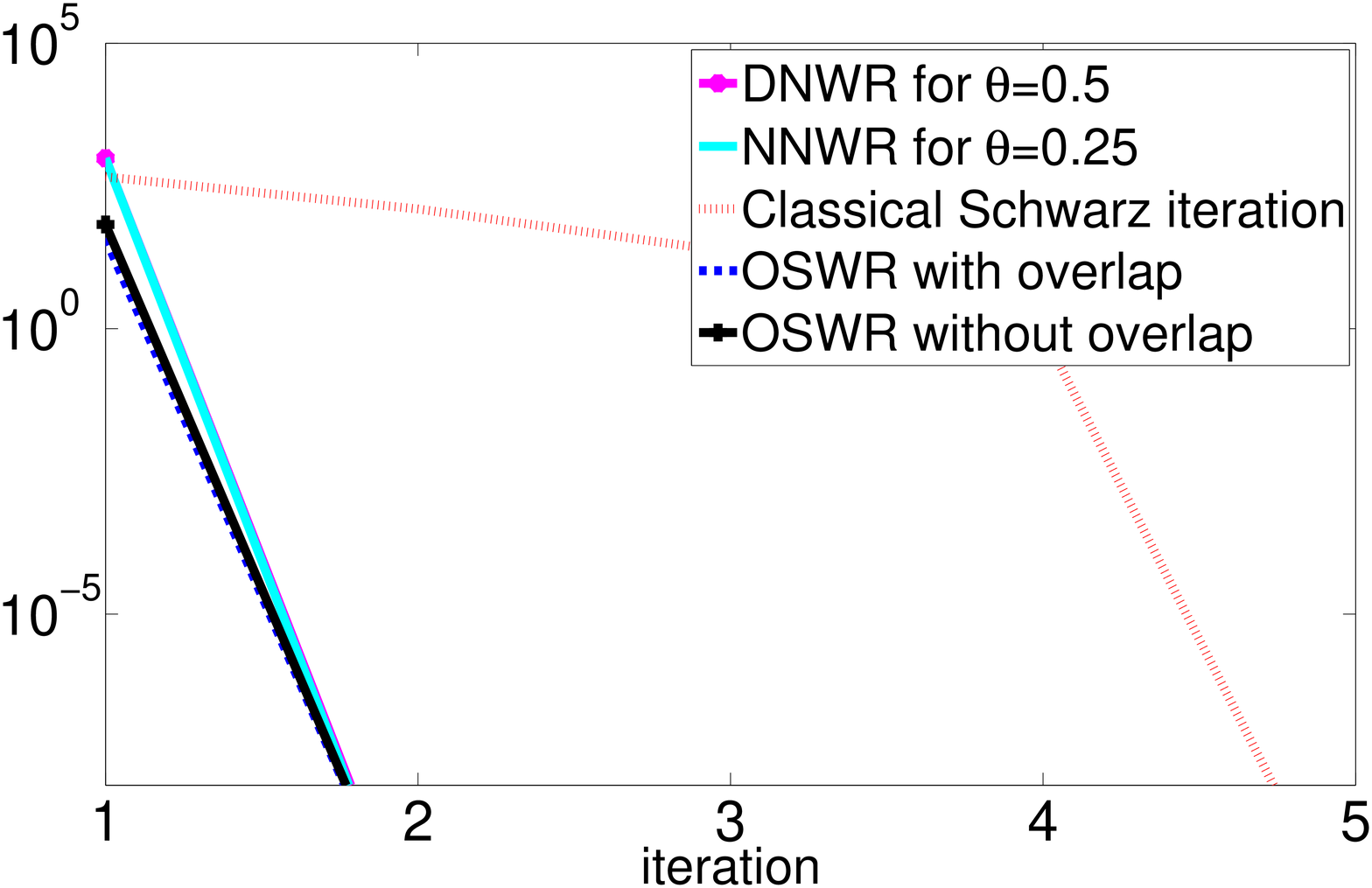}
  \includegraphics[width=0.51\textwidth]{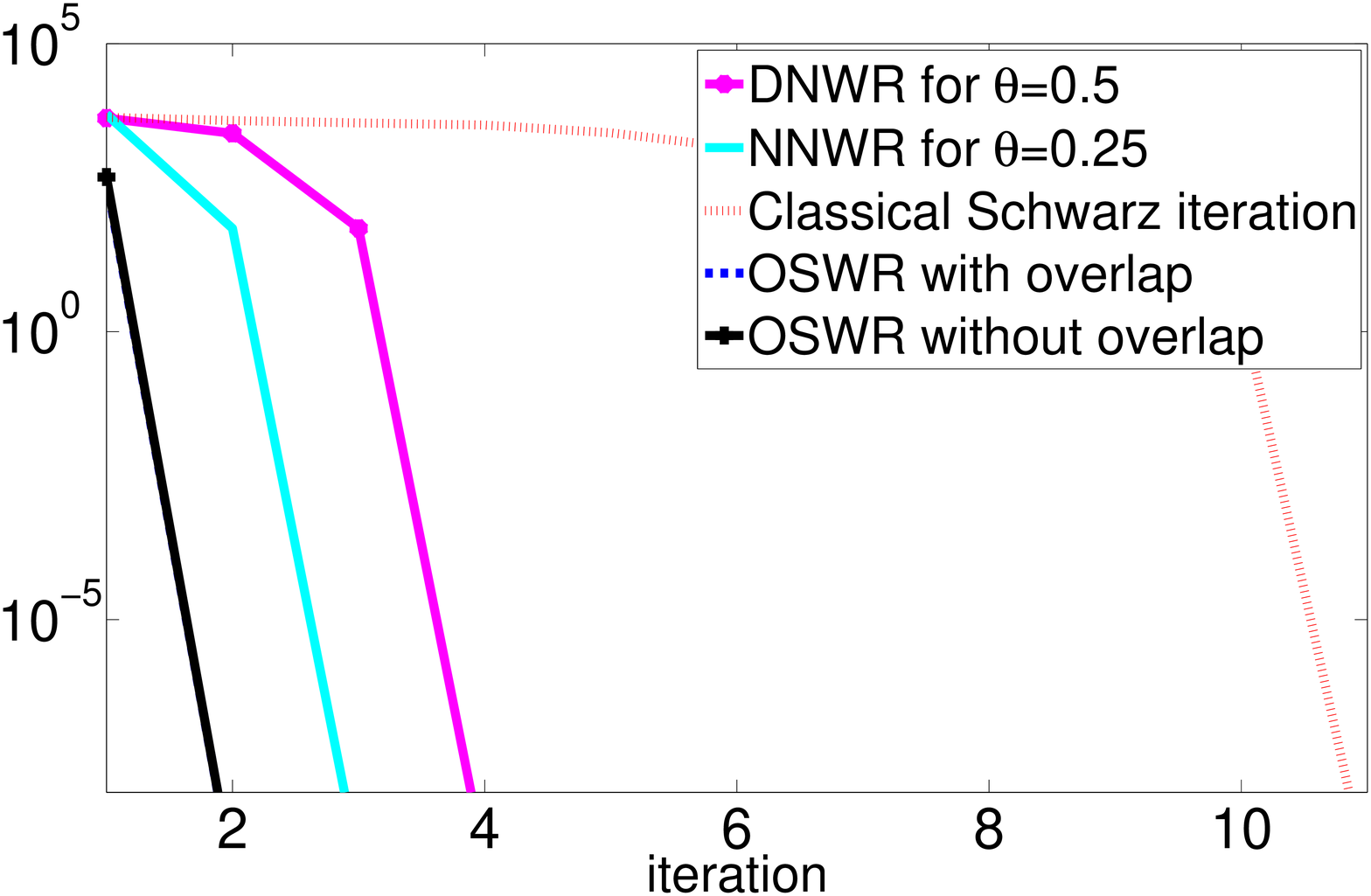}}
  \caption{Comparison of DNWR, NNWR, and SWR for $T=4$ on
the left, and $T=10$ on the right}
\label{mandalb_dd22:Fig3}
\end{figure} 
We consider the same model problem (\ref{mandalb_dd22:Numermodel}) with Dirichlet
boundary conditions along the physical boundary. We use for the overlapping Schwarz variant an overlap
of length $24\Delta x$, where $\Delta x=1/50$. We observe that the DNWR
and NNWR algorithms converge as fast as the Schwarz WR algorithms for
smaller time windows $T$. Due to the local nature of the Dirichlet-to-Neumann operator in 1d \cite{GHN}, SWR converges in a finite number of iterations just like DNWR
and NNWR. In higher dimensions, however, SWR will no longer converge
in a finite number of steps, but DNWR and NNWR will \cite{GKM2}.

\section{Conclusions }

We introduced the DNWR and NNWR algorithms for the second order wave equation, and analyzed
their convergence properties for the 1d case and a two
subdomain decomposition. We showed that for a particular choice of the
relaxation parameter, convergence can be achieved in a finite number
of steps. Choosing the time window lengh carefully, these algorithms
can be used to solve such problems in two iterations only. For a detailed analysis for the case of multiple
subdomains, see \cite{GKM2}.
\nocite{*}

\bibliographystyle{spmpsci}
\bibliography{mandalb_dd22} 

\end{document}